\newtheorem{theorem}{Theorem}[section]
\newtheorem{proposition}[theorem]{Proposition}
\newtheorem{lemma}[theorem]{Lemma}
\newtheorem{corollary}[theorem]{Corollary}
\title[Homogeneous involutions on upper triangular matrices]{Homogeneous involutions on upper triangular matrices}
\begin{document}
\author{Thiago Castilho de Mello}
\address{Universidade Federal de S\~ao Paulo, Instituto de Ci\^encia e Tecnologia, S\~ao Jos\'e dos Campos, Brazil}
\email{tcmello@unifesp.br}

\begin{abstract}
    Let $K$ be a field of characteristic different from 2 and let $G$ be a group. If the algebra $UT_n$ of $n\times n$ upper triangular matrices over $K$ is endowed with a $G$-grading $\Gamma: UT_n=\oplus_{g\in G}A_g$ we give necessary and sufficient conditions on $\Gamma$ that guarantees the existence of a homogeneous antiautomorphism on $A$, i.e., an antiautomorphism $\varphi$ satisfying $\varphi(A_g)=A_{\theta(g)}$ for some permutation $\theta$ of the support of the grading. It turns out that $UT_n$ admits a homogeneous antiautomorphism if and only if the reflection involution of $UT_n$ is homogeneous. Moreover, we prove that if one homogeneous antiautomorphism of $UT_n$ is defined by the map $\theta$ then any other homogeneous antiautomorphism is defined by the same map $\theta$.
   
\end{abstract}

\maketitle

\section*{Introduction}

In the last two decades, gradings on algebras have been intensively studied. One of the main purposes of such studies was to give a description of all gradings on a given algebra or class of algebras up to graded isomorphisms. As examples, gradings on finite dimensional simple associative, Lie and Jordan algebras have been studied since then (see \cite{BahZai, BahShZai, ElKoch}). 
The description of gradings on simple associative algebras, was of particular importance to the description of gradings on some simple Lie and Jordan Algebras. Also the description of gradings on non-simple important algebras have been carried out (see \cite{Yukihide}).

The special case of $\mathbb{Z}_2$-gradings on matrix algebras was applied to the study of identities of associative algebras, in particular the solution of the Specht problem was strongly based on it. Graded identities on associative and non-associative algebras has become of great interest of many authors since then and an extensive number of papers about this subject has been written.

Gradings on matrix algebras were applied to obtain gradings on simple finite dimensional Lie and Jordan algebras (see, for example \cite{BahShZai, BahZai, ElKoch}). By considering involutions on matrix algebras, gradings on the Lie algebra of skew-symmetric elements and on the Jordan algebra of symmetric elements were obtained. This was shown to be possible when such involutions act by preserving degrees of homogeneous elements, the so called \emph{graded involutions}.

In contrast with the above case, when considering the transpose involution on $M_n(K)$ with an elementary grading, one can see that it acts on homogeneous components by inverting degrees. Involutions satisfying this property on graded algebras are called \emph{degree-inverting} involutions. Such involutions arise naturally in other contexts, for instance, the usual involution on Leavitt path algebras endowed with its natural $\mathbb{Z}$-grading are degree-inverting involutions.

The graded involutions and the degree-inverting involutions on full and upper triangular matrix algebras have been completely described when $K$ is a field of characteristic different from 2 (see \cite{BahShZai, BahZai,  FondeM, FonSanYas, VaZai2}). Also, ordinary and graded involutions on block-triangular matrices were studied in \cite{FGDY}. 

A natural generalization after the description of graded and degree-inverting involutions is to describe the homogeneous involutions, i.e., involutions $*$ on $G$-graded algebras $\Gamma: A=\oplus_{g\in G}A_g$, for which there exists a bijection $\theta:Supp \Gamma \longrightarrow Supp\Gamma$ satisfying $(A_g)^*=A_{\theta(g)}$, for each $g\in Supp \Gamma$, where $Supp \Gamma$ denotes the support of the grading $\Gamma$.

In this paper we completely solve this problem for the algebra of upper triangular matrices, namely, we give necessary and sufficient conditions on the grading so that it admits a homogeneous involution. Actually, we do it for a more general class of maps: antiautomorphisms. 

The paper is organized as follows: In section 1, we present the definitions and basic results concerning gradings, automorphisms, antiautomorphisms and involutions on upper triangular matrices. Section 2 describes the homogeneous automorphisms of the algebra of upper triangular matrices. Finally, in section 3 we present the main results of the paper, describing conditions under with the existence of homogeneous involutions and antiautomorphisms is guaranteed.

\section{Preliminaries}

Unless otherwise stated, in this paper $G$ denotes a group with neutral element $e$ and $K$ denotes a field of characteristic different from 2. The word \emph{algebra} means an associative algebra over the field $K$. By $M_n(K)$ we denote the algebra of $n\times n$ matrices over $K$ and by $UT_n$ the algebra of $n\times n$ upper triangular matrices over $K$. For each $1\leq i, j \leq n$, we define the matrix $E_{i,j}$ as the matrix with 1 in entry $(i,j)$ and 0 elsewhere. We call such matrices \emph{matrix units}.

If $A$ is an algebra, a \emph{$G$-grading}, $\Gamma$ on $A$ is a decomposition of $A$ as a direct sum of subspaces indexed by elements of $G$, \[\Gamma: A=\bigoplus_{g\in G} A_g\]
such that for each $g, h\in G$, we have $A_gA_h\subseteq A_{gh}$. In this case, we say that $A$ is a $G$-graded algebra. If  $a\in A_g$ we say that $a$ is  \emph{homogeneous of degree $g$} and we denote $\deg a=g$. The set \[\textrm{Supp}\, \Gamma = \{g\in G\,|\, A_g\neq \{0\}\}\] is called the \emph{support} of the grading.

If $A$ and $B$ are $G$-graded algebras, a homomorphism of algebras $\varphi: A\longrightarrow B$ is a \emph{graded homomorphism} if for any $g\in G$, $\varphi(A_g)=B_g$. In particular, if $\varphi$ is bijective, we say that $\varphi$ is a \emph{graded isomorphism}. A \emph{graded automorphism} of a graded algebra $A$ is a graded isomorphism $\varphi: A\longrightarrow A$. 

If $A$ is an algebra, and $\theta:\textrm{Supp}\, \Gamma \longrightarrow \textrm{Supp}\, \Gamma$ is a bijection, we say that an automorphism $\varphi$ of $A$ is \emph{$\theta$-homogeneous} if $\varphi(A_g)=A_{\theta(g)}$, for any $g\in \textrm{Supp}\, \Gamma$. We say that $\varphi$ is homogeneous if there exists some $\theta$ such that $\varphi$ is $\theta$-homogeneous.

In this paper we are specially interested in antiautomorphisms and involutions. An \emph{antiautomorphism} on an algebra $A$ is a linear map $\varphi:A\longrightarrow A$ satisfying $\varphi(xy)=\varphi(y)\varphi(x)$, for any $x,y\in A$. An antiautomorphism $\varphi$ is an \emph{involution} if it has order 2, i.e., if $\varphi^2(x)=x$, for any $x\in A$.  We say that an antiautomorphism (involution) $\varphi:A\longrightarrow A$ is a \emph{graded antiautomorphism (involution)} if $\varphi(A_g)=A_g$, for any $g\in G$. 

If $A$ is an algebra, and $\theta:\textrm{Supp}\, \Gamma \longrightarrow \textrm{Supp}\, \Gamma$ is a bijection, we say that a antiautomorphism (involution) $\varphi$ of $A$ is \emph{$\theta$-homogeneous} if $\varphi(A_g)=A_{\theta(g)}$, for any $g\in \textrm{Supp}\, \Gamma$. We say that $\varphi$ is homogeneous if there exists some $\theta$ such that $\varphi$ is $\theta$-homogeneous.
If $\theta$ is the identity map of $\textrm{Supp}\,\Gamma$, $\theta$-homogeneous antiautomorphism means graded antiautomorphism. If $\theta$ is the map $g\mapsto g^{-1}$, $\theta$-homogeneous antiautomorphism means degree-inverting antiautomorphism.

We now recall basic properties of gradings on $UT_n$. In \cite{VaZai} the authors proved that, up to a graded isomorphism, if the characteristic of $K$ is not 2, any $G$-grading on $UT_n$ is elementary, this means each $E_{i,j}$ is homogeneous and there exists $(g_1, \dots, g_n)\in G^n$ such that $\deg E_{i,j}= g_i^{-1}g_j$. The problem in defining an elementary grading on $UT_n$ by means of this $n$-tuple in $G^n$, it that the isomorphism classes are not uniquely defined by an $n$-tuple. To overcome this problem, one may define an elementary grading from an $(n-1)$-tuple $(g_1, \dots, g_{n-1})\in G^{n-1}$ where $\deg E_{i,i+1}=g_i$. Now each isomorphism class of $G$-gradings on $UT_n$ is uniquely determined by one such $(n-1)$-tuple (see \cite{DiVKoVa}).

\section{Homogeneous automorphisms of upper triangular matrices}

In this section we prove that any homogeneous automorphism of $UT_n$ is a conjugation by an invertible homogeneous matrix of degree $e$. In particular, any homogeneous automorphism of $UT_n$ is a graded automorphism. It is important to remark that it is a particular property of $UT_n$, i.e., it does not hold for any algebra. For instance, if $A_1=K\langle X,Y\,|\, [X,Y]=1 \rangle$ is the Weyl algebra endowed with the $\mathbb{Z}$-grading induced by $\deg X=1$, $\deg Y=-1$, then the automorphism defined by $X\mapsto -Y$, $Y\mapsto X$ is a degree inverting automorphism.

We start with some technical results.

\begin{lemma}\label{inverse}
Let $n\geq 3$ be an integer and let let $1<k<n$. If we write an invertible matrix $P\in UT_n$ as \[P=\left(\begin{array}{ccc} A & u & B\\ 0 & x & v \\ 0 & 0 & C\end{array}\right)\in UT_n,\] with $A\in UT_{k-1}, C\in UT_{n-k}$, $u\in M_{k-1\times 1}(K), v\in M_{1\times n-k}(K)$, $x\in K$ and $B\in M_{k-1\times n-k}(K)$. Then 
\[P^{-1}=\left(\begin{array}{ccc} A^{-1} & -A^{-1}ux^{-1} & A^{-1}(ux^{-1}v-B)C^{-1}\\ 0 & x^{-1} & -x^{-1}vC^{-1} \\ 0 & 0 & C^{-1}\end{array}\right).\]
\end{lemma}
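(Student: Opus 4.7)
The plan is to verify the claimed formula by direct block multiplication, namely to set $Q$ equal to the matrix displayed on the right-hand side and show that $PQ=I_n$. First I would observe that since $P\in UT_n$ is invertible, its diagonal entries are all nonzero; but this diagonal consists of the diagonal entries of $A$, the scalar $x$, and the diagonal entries of $C$. Hence $A\in UT_{k-1}$ and $C\in UT_{n-k}$ are both invertible (being upper triangular with nonzero diagonals) and $x\in K^{\times}$, so each of the entries appearing in $Q$ is well defined.

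Next I would multiply $P$ by $Q$ block by block using the $3\times 3$ block structure. The three diagonal blocks immediately yield $A A^{-1}=I_{k-1}$, $x x^{-1}=1$, and $C C^{-1}=I_{n-k}$. The $(1,2)$ block reduces to $A(-A^{-1}u x^{-1})+u x^{-1}=-u x^{-1}+u x^{-1}=0$, and symmetrically the $(2,3)$ block reduces to $-v C^{-1}+v C^{-1}=0$. The only entry that demands care is the $(1,3)$ block, which expands as
\[A\cdot A^{-1}(u x^{-1}v-B)C^{-1}+u\cdot(-x^{-1}v C^{-1})+B\cdot C^{-1}=u x^{-1}v C^{-1}-B C^{-1}-u x^{-1}v C^{-1}+B C^{-1}=0.\]

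I expect this last cancellation to be the only conceptual content of the lemma: the corner entry $A^{-1}(u x^{-1}v-B)C^{-1}$ of $Q$ is engineered precisely so that the cross-term produced by $u\cdot(-x^{-1}v C^{-1})$ is absorbed together with the term $B C^{-1}$. Once this is checked, we have $PQ=I_n$, and since $P$ is invertible this forces $Q=P^{-1}$. There is no genuine obstacle; the calculation is purely bookkeeping, and one could alternatively derive the formula by grouping the first two block rows together and applying the familiar $2\times 2$ block inversion formula twice, but the direct verification above is the shortest route.
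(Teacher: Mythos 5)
Your proof is correct and is exactly the ``straightforward computation'' that the paper's one-line proof alludes to: you verify $PQ=I_n$ block by block, having first justified that $A$, $C$, and $x$ are invertible. Nothing is missing.
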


\begin{proof}
    It follows from straightforward computations.
\end{proof}

The next lemma asserts that homogeneous automorphisms of $UT_n$ preserve the neutral component. Although it is stated for $UT_n$, the proof also holds for any unitary algebra in which $1$ is homogeneous.

\begin{lemma}\label{invariant}
    Let $\varphi:UT_n \longrightarrow UT_n$  be a homogeneous automorphism. Then \[\varphi((UT_n)_e)=(UT_n)_e.\]
\end{lemma}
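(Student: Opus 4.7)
The plan is to exploit the fact that $1 \in UT_n$ is homogeneous together with the basic identity $\varphi(1)=1$, which pins down $\theta(e)$ to be $e$.

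First I would observe that in any grading of $UT_n$ (which may be assumed elementary by the result of \cite{VaZai} cited in the preliminaries), the identity $1=E_{1,1}+\cdots+E_{n,n}$ is a sum of homogeneous idempotents all of degree $g_i^{-1}g_i=e$, hence $1\in (UT_n)_e$. More abstractly, this is the only place where unitality enters: if $1$ lies in the homogeneous component $A_g$, then $1=1\cdot 1\in A_g\cdot A_g\subseteq A_{g^2}$, so $g=g^2$ and thus $g=e$. This is the content of the remark about arbitrary unitary algebras with homogeneous identity.

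Next, since $\varphi$ is $\theta$-homogeneous for some bijection $\theta$ of $\operatorname{Supp}\Gamma$, we have $\varphi((UT_n)_e)=(UT_n)_{\theta(e)}$. Being an algebra automorphism, $\varphi(1)=1$, so
\[
1\;=\;\varphi(1)\;\in\;\varphi((UT_n)_e)\;=\;(UT_n)_{\theta(e)}.
\]
Since $1$ is a nonzero element of $(UT_n)_e$ and the decomposition $UT_n=\bigoplus_{g}(UT_n)_g$ is direct, the only way for $1$ to lie simultaneously in $(UT_n)_e$ and $(UT_n)_{\theta(e)}$ is $\theta(e)=e$. Therefore $\varphi((UT_n)_e)=(UT_n)_e$.

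There is essentially no obstacle here: the argument is a one-line consequence of unitality plus homogeneity of $1$. The only thing to be mildly careful about is that $e$ need not a priori belong to $\operatorname{Supp}\Gamma$; but homogeneity of $1\neq 0$ forces $e\in \operatorname{Supp}\Gamma$, so $\theta(e)$ is defined and the argument goes through verbatim.
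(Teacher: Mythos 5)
Your proof is correct and rests on the same key fact as the paper's: the identity element is homogeneous of degree $e$, and this pins down the image of the neutral component. The paper reaches the conclusion by a minor variant — instead of invoking $\varphi(1)=1$, it uses $((UT_n)_e)^2=(UT_n)_e$ to get $\varphi((UT_n)_e)\subseteq (UT_n)_g\cap (UT_n)_{g^2}$ and hence $g=g^2=e$ — so the two arguments are essentially the same.
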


\begin{proof}
Let $g\in G$, such that $\varphi((UT_n)_e)=(UT_n)_g$. Since the identity matrix lies in the neutral component, we have $((UT_n)_e)^2=(UT_n)_e$. As a consequence, 
\[\varphi((UT_n)_e)=\varphi(((UT_n)_e)^2)=\varphi((UT_n)_e)^2=((UT_n)_g)^2\subseteq(UT_n)_{g^2}.\]
This implies that $\varphi((UT_n)_e)\subseteq (UT_n)_g\cap (UT_n)_{g^2}$. Since $\varphi$ is an automorphism, $\varphi((UT_n))_e$ is nonzero and this implies that $(UT_n)_g\cap (UT_n)_{g^2}\neq \{0\}$, from which we obtain $g=g^2$, and $g=e$.  
\end{proof}

If $A$ is an algebra and $a\in A$ is an invertible element, the map $\varphi:A\longrightarrow A$ defined by $\varphi(x)=a^{-1}xa$ is an automorphism of $A$. Automorphisms of this type are called inner automorphisms.  We recall a well known result from \cite{Kezlan} that any automorphism on $UT_n$ is inner.

\begin{theorem}\label{inner}
lf $R$ is any commutative ring with unity, then every $R$-algebra automorphism of $UT_n(R)$ is inner.
\end{theorem}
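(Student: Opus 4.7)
The plan is to peel off the automorphism $\varphi$ in stages, by a sequence of inner conjugations, until what remains is the identity. First I would isolate the natural invariant preserved by any automorphism of $UT_n(R)$: the two-sided ideal $N = \sum_{i<j} RE_{i,j}$ of strictly upper triangular matrices. Over a commutative ring $R$, this can be characterized as the unique ideal of $UT_n(R)$ that is nilpotent and whose quotient is isomorphic to the diagonal algebra $R^n$; hence it is preserved by any automorphism, and $\varphi$ descends to an $R$-algebra automorphism $\bar\varphi$ of $UT_n(R)/N \cong R^n$.

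Next I would show that $\bar\varphi$ fixes each class $\bar E_{i,i}$. While automorphisms of $R^n$ can in principle permute its standard idempotents, the additional structure coming from $UT_n(R)$ rules out nontrivial permutations. Concretely, I would examine how each $\bar E_{i,i}$ acts on the successive slices $N^k/N^{k+1}$ of the filtration $N \supseteq N^2 \supseteq \cdots \supseteq N^{n-1} \supseteq 0$, regarded as bimodules over $R^n$; the resulting support pattern is not permutation-symmetric, and forces $\bar\varphi(\bar E_{i,i}) = \bar E_{i,i}$ for every $i$.

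With $\varphi(E_{i,i}) \equiv E_{i,i} \pmod{N}$ in hand, I would invoke the standard lifting of orthogonal idempotents modulo a nilpotent ideal to obtain an element $Q \in 1+N$ that simultaneously conjugates $\varphi(E_{i,i})$ to $E_{i,i}$ for all $i$; such a $Q$ is automatically invertible in $UT_n(R)$, and an explicit inverse is furnished by Lemma~\ref{inverse}. Replacing $\varphi$ by $X \mapsto Q\varphi(X)Q^{-1}$ reduces to the case $\varphi(E_{i,i}) = E_{i,i}$ for all $i$. Now the Peirce decomposition forces $\varphi(E_{i,j}) = E_{i,i}\varphi(E_{i,j})E_{j,j} = \alpha_{i,j}E_{i,j}$ with $\alpha_{i,j} \in R^\times$; the relation $E_{i,j}E_{j,k} = E_{i,k}$ gives $\alpha_{i,j}\alpha_{j,k} = \alpha_{i,k}$, from which $\alpha_{i,j} = d_i^{-1}d_j$, and therefore $\varphi$ is conjugation by $D = \mathrm{diag}(d_1,\ldots,d_n)$.

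The step I expect to be the main obstacle is the second one: showing that $\bar\varphi$ acts trivially on the diagonal idempotents. It is here that the asymmetry of $UT_n(R)$ (as opposed to $R^n$) must be used in an essential way, and the argument has to survive the passage from fields to an arbitrary commutative ring; the familiar trick of looking at ranks of idempotents may have to be localized at each maximal ideal of $R$ and then glued to the filtration argument above. Once that rigidity is in place, the remaining steps are essentially formal.
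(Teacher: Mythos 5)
This theorem is not proved in the paper at all: it is recalled verbatim from Kezlan \cite{Kezlan}, so there is no in-paper argument to compare your proposal against. Judged on its own, your outline is a correct and essentially standard route to the result (and, as far as I can tell, close in spirit to Kezlan's own argument): preserve the strictly upper triangular ideal $N$, rigidify the diagonal idempotents modulo $N$, lift and conjugate them into place, and finish with the Peirce/cocycle computation, which is airtight as you state it. Two adjustments would tighten the first two steps. For step one, the ``unique nilpotent ideal with quotient $R^n$'' characterization is shakier than you need: it is cleaner to observe that $N$ is exactly the ideal generated by all commutators (diagonal entries of $XY$ and $YX$ agree since $R$ is commutative, and $E_{i,j}=[E_{i,i},E_{i,j}]$ for $i<j$), whence any ideal with commutative quotient contains $N$; applying this to $\varphi(N)$ and $\varphi^{-1}(N)$ gives $\varphi(N)=N$ with no uniqueness claim. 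For step two, the obstacle you flag is real but manageable: localizing at a maximal ideal makes the standard idempotents of $R^n$ primitive, so $\bar\varphi$ induces a genuine permutation $\sigma$ there, and the bimodule condition $E_{i,i}(N/N^2)E_{j,j}\neq 0 \iff j=i+1$ forces $\sigma(i+1)=\sigma(i)+1$, hence $\sigma=\mathrm{id}$; since an idempotent of $R$ vanishing in every localization is zero, the identities $\bar\varphi(\bar E_{i,i})=\bar E_{i,i}$ glue back over $R$. With those repairs the proof goes through.
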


\begin{lemma}\label{entry}
    Let $\varphi(X)=P^{-1}XP$ be an automorphism of $UT_n$, with $P=(p_{ij})\in UT_n$ an invertible matrix. Then if $k\leq l$, the entry $(k,l)$ of $\varphi(E_{kk})$  is $\dfrac{p_{kl}}{p_{kk}}$.
\end{lemma}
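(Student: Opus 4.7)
The plan is to compute the product $P^{-1} E_{kk} P$ directly, exploiting the special shape of $E_{kk}$ together with the diagonal formula for the inverse of an upper triangular matrix.

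First I would observe that left-multiplication by $E_{kk}$ wipes out every row of $P$ except the $k$-th, so $E_{kk}P$ is the matrix whose $k$-th row equals the $k$-th row of $P$ and whose remaining rows are zero; that is, $(E_{kk}P)_{mj} = \delta_{m,k}\, p_{kj}$. Now left-multiplying by $P^{-1}$, the $(k,l)$ entry of $\varphi(E_{kk}) = P^{-1}E_{kk}P$ is
\[
(P^{-1}E_{kk}P)_{kl} \;=\; \sum_{m=1}^n (P^{-1})_{km} (E_{kk}P)_{ml} \;=\; (P^{-1})_{kk}\, p_{kl}.
\]

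The only remaining step is to identify $(P^{-1})_{kk}$. Since $P$ is invertible and upper triangular, its diagonal entries are nonzero, and a standard fact (which is precisely what Lemma \ref{inverse} yields after isolating the $k$-th diagonal block of size $1\times 1$) gives $(P^{-1})_{kk} = p_{kk}^{-1}$. Substituting back produces $\varphi(E_{kk})_{kl} = p_{kl}/p_{kk}$ for $k \leq l$, as claimed.

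There is no real obstacle here; the statement is a direct computation. The only mild care needed is to justify the formula $(P^{-1})_{kk} = p_{kk}^{-1}$, which is why Lemma \ref{inverse} is invoked: choosing the block decomposition with the middle $1\times 1$ block placed at position $k$ reads off this diagonal inverse entry immediately.
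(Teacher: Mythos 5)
Your proof is correct and follows essentially the same route as the paper: both compute $P^{-1}E_{kk}P$ directly and read off that the $k$-th row of the result is the $k$-th row of $P$ scaled by $p_{kk}^{-1}$. Your entrywise bookkeeping, needing only the fact that $(P^{-1})_{kk}=p_{kk}^{-1}$, is a marginally leaner presentation of the paper's block computation via Lemma \ref{inverse}, but the underlying argument is identical.
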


\begin{proof}
    Let us write \[P=\left(\begin{array}{ccc} A & u & B\\ 0 & x & v \\ 0 & 0 & C\end{array}\right)\in UT_n,\] where $A, u, B, x, v, C$ are as in Lemma \ref{inverse}. In particular, $x=p_{kk}$. Using the expression of $P^{-1}$ given in Lemma \ref{inverse}, we have $\varphi(E_{ii})=$
    \begin{align*}
            & = P^{-1}E_{ii}P\\
            &=\begin{pmatrix}A^{-1} & -A^{-1}ux^{-1} & A^{-1}(ux^{-1}v-B)C^{-1}\\ 0 & x^{-1} & -x^{-1}vC^{-1} \\ 0 & 0 & C^{-1}\end{pmatrix}
            \begin{pmatrix}
                0 & 0 & 0\\
                0 & 1 & 0\\
                0 & 0 & 0
            \end{pmatrix}
            \begin{pmatrix} A & u & B\\ 0 & x & v \\ 0 & 0 & C\end{pmatrix}\\
            &= \begin{pmatrix}
            0 & -A^{-1}ux^{-1}  & 0\\
            0 & x^{-1}          & 0\\
            0 & 0               & 0
            \end{pmatrix} \begin{pmatrix} A & u & B\\ 0 & x & v \\ 0 & 0 & C\end{pmatrix}\\
            &= \begin{pmatrix}  0 & -A^{-1}u    & -A^{-1}ux^{-1}v\\
                                0 & 1           & x^{-1}v \\
                                0 & 0           & 0 \end{pmatrix}
    \end{align*}
Now the lemma follows once we observe in the above expression that the $k$-th line of $\varphi(E_{kk})$ is the $k-th$ line of $P$ divided by $x=p_{kk}$. 
\end{proof}

We are now ready to prove the main result of this section. Before that, let us observe that if $P\in UT_n$ is an invertible matrix which is homogeneous of degree $e$, then $\phi(X)=P^{-1}XP$ is a homogeneous automorphism. In fact, it is a graded automorphism. The main result of this section is the converse of this fact.

\begin{theorem}\label{Hom.Aut}

Let $UT_n$ be endowed with a $G$-grading and let $\varphi:UT_n \longrightarrow UT_n$ be a homogeneous automorphism. Then there exists a homogeneous invertible matrix $P\in UT_n$ of degree $e$ such that $\varphi(X)=P^{-1}XP$.
\end{theorem}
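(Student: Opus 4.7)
The plan is to combine the three preparatory lemmas. By Theorem \ref{inner}, any automorphism of $UT_n$ is inner, so we can write $\varphi(X)=P^{-1}XP$ for some invertible $P=(p_{ij})\in UT_n$. The whole task reduces to showing that, up to the freedom of replacing $P$ by a scalar multiple, $P$ must already be homogeneous of degree $e$. Since the result is invariant under graded isomorphism, I would first use the classification of $G$-gradings on $UT_n$ recalled in Section~1 to assume the grading is elementary, coming from some tuple $(g_1,\dots,g_n)\in G^n$ with $\deg E_{ij}=g_i^{-1}g_j$. In particular every diagonal matrix unit $E_{kk}$ satisfies $\deg E_{kk}=e$, which is the input required for the lemmas above.

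The key step then is to apply Lemma \ref{invariant} to each $E_{kk}$: since $E_{kk}\in(UT_n)_e$ and $\varphi$ is homogeneous, Lemma \ref{invariant} gives $\varphi(E_{kk})\in(UT_n)_e$. Next I would feed this into Lemma \ref{entry}, which describes the $k$-th row of $\varphi(E_{kk})$ explicitly as
\[
\bigl(0,\dots,0,\; 1,\; p_{k,k+1}/p_{kk},\; \dots,\; p_{k,n}/p_{kk}\bigr).
\]
For this row to lie in the neutral component, every entry in a position $(k,l)$ whose degree $g_k^{-1}g_l$ is not $e$ must vanish. Since $p_{kk}\neq 0$ (as $P$ is invertible in $UT_n$), this forces $p_{k,l}=0$ whenever $g_k\neq g_l$.

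Letting $k$ range from $1$ to $n$ yields this conclusion for every row of $P$, so every nonzero entry $p_{k,l}$ of $P$ satisfies $g_k=g_l$, i.e.\ $\deg E_{k,l}=e$. Consequently $P\in(UT_n)_e$, which is exactly the claim of the theorem. There is essentially no analytic obstacle here, since both Kezlan's theorem and the two preparatory lemmas do the work; the only subtle point is noticing that Lemma \ref{entry} alone determines a full row at a time, so that iterating over all $k$ is what is needed to pin down every off-diagonal entry of $P$. (The freedom $P\mapsto \lambda P$ with $\lambda\in K^{\times}$, coming from the center of $UT_n$, does not affect the conclusion since scalars are always homogeneous of degree $e$.)
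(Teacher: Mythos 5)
Your proposal is correct and follows essentially the same route as the paper: reduce to an elementary grading, invoke Kezlan's theorem to write $\varphi(X)=P^{-1}XP$, apply Lemma \ref{invariant} to see that each $\varphi(E_{kk})$ stays in the neutral component, and then use Lemma \ref{entry} to force $p_{kl}=0$ whenever $\deg E_{kl}\neq e$. The only cosmetic difference is that the paper separately dispatches the cases $n\leq 2$ by direct computation (since the block decomposition behind Lemma \ref{entry} needs $n\geq 3$), a detail worth a sentence but not a gap in the argument.
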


\begin{proof}
    First we recall that we may consider only the case where the grading is elementary. In particular, for any $i$, $\deg E_{ii}=e$.
    
    The case $n\leq 2$ is quite simple and follows from direct computations. So let us assume $n\geq 3$ and let $\varphi$ be one such automorphism of $UT_n$. By Theorem \ref{inner} there exists an invertible matrix $P=(p_{ij})\in UT_n$ such that $\varphi(X)=P^{-1}XP$, for all $X\in UT_n$.
    
    Let $i\leq j$, and assume $\deg E_{ij}\neq e$. From Lemma \ref{invariant}, for each $i$, $\varphi(E_{ii})\in (UT_n)_e$. 
    From Lemma $\ref{entry}$, we know that entry $(i,j)$ of $\varphi(E_{ii})$ is $\dfrac{p_{ij}}{p_{ii}}$. Since $\deg E_{ij}\neq e$ and $\varphi(E_{ii})\in (UT_n)_e$, we obtain $p_{ij}=0$. That is, the entries $(i,j)$ of $P$ such that $\deg E_{ij}\neq e$ are zero. From this we conclude that the matrix $P$ has homogeneous degree $e$, and the theorem is proved. 
\end{proof}

Direct consequences now follow.

\begin{corollary}

Every homogeneous automorphism of $UT_n$ is a graded automorphism.

\end{corollary}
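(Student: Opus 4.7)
The corollary is an immediate consequence of Theorem \ref{Hom.Aut}, so the plan is essentially a one-line observation rather than a substantive argument. Let me lay it out.

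The plan is to invoke Theorem \ref{Hom.Aut} to write any homogeneous automorphism $\varphi$ in the form $\varphi(X)=P^{-1}XP$ for some invertible matrix $P\in UT_n$ that is homogeneous of degree $e$. Since $P$ (and hence $P^{-1}$) is homogeneous of degree $e$, for any homogeneous $X\in (UT_n)_g$ the product $P^{-1}XP$ lies in $(UT_n)_{e^{-1}ge}=(UT_n)_g$. Therefore $\varphi((UT_n)_g)\subseteq (UT_n)_g$ for every $g\in G$, and applying the same reasoning to $\varphi^{-1}$ (which is the conjugation by $P^{-1}$, also homogeneous of degree $e$) gives the reverse inclusion. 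Hence $\varphi((UT_n)_g)=(UT_n)_g$ for every $g$, which is precisely the definition of a graded automorphism.

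There is no real obstacle here; the entire content of the corollary has already been absorbed into Theorem \ref{Hom.Aut}. The only thing worth emphasizing in the write-up is that the permutation $\theta$ of $\mathrm{Supp}\,\Gamma$ for which $\varphi$ is $\theta$-homogeneous must necessarily be the identity, which is exactly what the conjugation-by-degree-$e$-element description forces.
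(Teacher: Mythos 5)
Your argument is correct and is exactly the route the paper intends: the corollary is stated as a direct consequence of Theorem \ref{Hom.Aut}, using the observation (already made just before that theorem) that conjugation by an invertible homogeneous matrix of degree $e$ preserves every homogeneous component. Nothing is missing.
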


\begin{corollary}
    Any graded automorphism of $UT_n$ is given by the conjugation of an invertible homogeneous matrix of degree $e$.
\end{corollary}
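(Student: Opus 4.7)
The plan is to observe that this corollary is immediate from Theorem \ref{Hom.Aut} once one unfolds the definitions. A graded automorphism $\varphi$ of $UT_n$ satisfies $\varphi(A_g)=A_g$ for every $g\in G$, which is precisely the statement that $\varphi$ is $\theta$-homogeneous for $\theta$ the identity map of $\mathrm{Supp}\,\Gamma$. So first I would record this: graded automorphisms form a subclass of the homogeneous automorphisms considered in the previous theorem.

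Next I would invoke Theorem \ref{Hom.Aut} directly: there exists an invertible matrix $P\in UT_n$, homogeneous of degree $e$, such that $\varphi(X)=P^{-1}XP$ for every $X\in UT_n$. This is exactly the claim of the corollary.

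The only conceptual point worth mentioning is that the opposite direction (conjugation by a homogeneous invertible matrix of degree $e$ is a graded automorphism) was already pointed out in the paragraph preceding Theorem \ref{Hom.Aut}, so the two statements together confirm that the graded automorphisms of $UT_n$ are exactly the inner automorphisms induced by degree-$e$ homogeneous invertible elements. I do not foresee any obstacle: the content of the corollary is fully absorbed by the theorem, and no auxiliary computation is required.
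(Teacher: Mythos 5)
Your proposal is correct and matches the paper's reasoning: the corollary is stated as a direct consequence of Theorem \ref{Hom.Aut}, obtained exactly as you describe by noting that a graded automorphism is a homogeneous automorphism (with $\theta$ the identity on $\mathrm{Supp}\,\Gamma$) and then applying the theorem. No further argument is needed.
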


\section{Homogeneous antiautomorphisms and involutions of upper triangular matrices}

Let us now study conditions under which an antiautomorphism $\varphi$ of $UT_n$ is homogeneous. The following lemma is stated for $UT_n$ but it holds for any algebra $A$ in which every automorphism is intern.

\begin{lemma}
    Let $\varphi$ and $\eta$ be antiautomorphisms of $UT_n$. Then there exists an invertible matrix $P\in UT_n$ such that $\varphi(X)=P^{-1}\eta(X)P$, for any $X\in UT_n$.
\end{lemma}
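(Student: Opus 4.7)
The plan is to reduce the statement to the fact, recorded as Theorem \ref{inner}, that every automorphism of $UT_n$ is inner. The bridge between antiautomorphisms and automorphisms is the elementary observation that the composition of two antiautomorphisms of any algebra is an automorphism, and that the inverse of an antiautomorphism is again an antiautomorphism.

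Concretely, first I would check (in one line, from the definition $\varphi(xy)=\varphi(y)\varphi(x)$) that if $\varphi$ and $\eta$ are antiautomorphisms then $\eta^{-1}$ is also an antiautomorphism, and that the composite $\psi := \varphi\circ \eta^{-1}$ is an automorphism of $UT_n$:
\[
\psi(xy)=\varphi(\eta^{-1}(xy))=\varphi(\eta^{-1}(x)\eta^{-1}(y))\cdot\text{(using anti)}=\varphi(\eta^{-1}(x))\varphi(\eta^{-1}(y))=\psi(x)\psi(y),
\]
after noting that $\eta^{-1}$ reverses products, so the two reversals cancel.

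Next, I would invoke Theorem \ref{inner} on $\psi$ to produce an invertible matrix $P\in UT_n$ with $\psi(Y)=P^{-1}YP$ for every $Y\in UT_n$. Substituting $Y=\eta(X)$ and unwinding the definition of $\psi$ gives $\varphi(X)=\psi(\eta(X))=P^{-1}\eta(X)P$ for every $X\in UT_n$, which is exactly the conclusion.

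There is really no obstacle here beyond invoking the correct result: the only thing one must be careful about is the order of the composition (i.e.\ whether to use $\varphi\circ\eta^{-1}$ or $\eta^{-1}\circ\varphi$) so that the final formula comes out with $\eta(X)$ sandwiched between $P^{-1}$ and $P$ rather than with $X$ itself. The observation that the lemma remains valid for any algebra in which every automorphism is inner is immediate from this proof, since Theorem \ref{inner} is the only property of $UT_n$ used.
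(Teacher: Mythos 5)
Your proof is correct and follows exactly the same route as the paper: form the automorphism $\varphi\circ\eta^{-1}$, apply Kezlan's theorem (Theorem \ref{inner}) to write it as conjugation by $P$, and substitute $Y=\eta(X)$. (The intermediate display is slightly loose about the order of factors when $\eta^{-1}$ reverses products, but you correct this in words, and the argument is sound.)
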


\begin{proof}
    First we notice that if $\eta$ is an antiautomorphism of an algebra $A$, its inverse is also an antiautomorphism of $A$. Also, the composition of two antiautomorphism of $A$ is an automorphism of $A$. Hence $\varphi \circ \eta^{-1}$ is an automorphism of $UT_n$. By Theorem  \ref{inner} there exists an invertible matrix $P\in UT_n$ such that $\varphi \circ \eta^{-1}(Y)=P^{-1}YP$, for all $Y\in UT_n$. In particular, for $Y=\eta(X)$, we have $\varphi(X)=P^{-1}\eta(X)P$ and this holds for any $X\in UT_n$ once $\eta$ is bijective.
\end{proof}

Let us denote by $\circ$ the involution on $UT_n$ defined on matrix units as $E_{i,j}^\circ=E_{n-j+1,n-i+1}$. We will call it the \emph{canonical involution} on $UT_n$. Notice that this involution is a reflection along secondary diagonal. For this reason it will also be called \emph{reflection involution}.

From the above lemma, we obtain that if $\varphi:UT_n\longrightarrow UT_n$ is an antiautomorphism, then there exists an invertible matrix $P\in UT_n$ such that $\varphi(X)=P^{-1}X^\circ P$, for all $X\in UT_n$.

A direct application of Lemma \ref{entry} yields the following lemma.

\begin{lemma}
    Let $\varphi(X)=P^{-1}X^\circ P$ be an antiautomorphism of $UT_n$, with $P=(p_{ij})\in UT_n$ an invertible matrix. Then if $k\leq l$, the entry $(k,l)$ of $\varphi(E_{n-k+1, n-k+1})$  is $\dfrac{p_{kl}}{p_{kk}}$.
\end{lemma}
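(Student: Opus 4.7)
The plan is to reduce this statement to the already-established Lemma \ref{entry} by a single symbolic observation about how $\circ$ acts on diagonal matrix units.

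First I would compute $E_{n-k+1,n-k+1}^\circ$ directly from the definition $E_{i,j}^\circ = E_{n-j+1,n-i+1}$. Setting $i=j=n-k+1$ gives $n-j+1 = n-i+1 = k$, so $E_{n-k+1,n-k+1}^\circ = E_{k,k}$. This is the only fact about $\circ$ that the proof really needs, and it is the reason the indices in the statement are shifted by the reflection $k \mapsto n-k+1$.

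Next, substituting into the formula for $\varphi$, I get
\[
\varphi(E_{n-k+1,n-k+1}) \;=\; P^{-1}\,E_{n-k+1,n-k+1}^\circ\, P \;=\; P^{-1}\,E_{k,k}\,P.
\]
Now the right-hand side is exactly the image of $E_{k,k}$ under the inner automorphism $X \mapsto P^{-1}XP$ considered in Lemma \ref{entry}. That lemma tells us that for $k \leq l$, the $(k,l)$-entry of $P^{-1}E_{k,k}P$ is $p_{kl}/p_{kk}$, which is precisely the conclusion.

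I do not expect any real obstacle: the whole content is the identification $E_{n-k+1,n-k+1}^\circ = E_{k,k}$, after which Lemma \ref{entry} applies verbatim. The only care needed is to keep the index shift straight and to note that the invertibility of $P$ (in particular $p_{kk}\neq 0$) is already guaranteed by the hypothesis, so the ratio $p_{kl}/p_{kk}$ is well-defined.
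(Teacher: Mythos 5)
Your proof is correct and is exactly the argument the paper intends: the paper gives no explicit proof, stating only that the lemma is ``a direct application of Lemma \ref{entry},'' and your identification $E_{n-k+1,n-k+1}^\circ = E_{k,k}$ followed by an appeal to that lemma is precisely that application.
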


One can observe that if $P\in UT_n$ is an invertible matrix with $\deg P=e$, and the involution $\circ$ of $UT_n$ is homogeneous, then the antiautomorphism $\varphi(X)=P^{-1}X^\circ P$ is also homogeneous. 

The converse of the above fact also holds.

\begin{proposition}
    Let $\varphi:UT_n\longrightarrow UT_n$ be an antiautomorphism of $UT_n$ given by $\varphi(X)=P^{-1}X^\circ P$, for some invertible matrix $P\in UT_n$. Then $\varphi$ is homogeneous if and only if $P$ is homogeneous of degree $e$ and $\circ$ is homogeneous.
\end{proposition}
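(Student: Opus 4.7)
The reverse implication is exactly the observation recorded just before the proposition, so the work lies in proving the forward direction: assume $\varphi$ is $\theta$-homogeneous for some bijection $\theta$ on $\mathrm{Supp}\,\Gamma$, and conclude that $P$ is homogeneous of degree $e$ and that $\circ$ is homogeneous.

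My first step is to observe that $\theta(e)=e$. Since we may assume the grading on $UT_n$ is elementary, every $E_{ii}$ lies in $(UT_n)_e$ and $1=\sum_i E_{ii}\in (UT_n)_e$, which yields $((UT_n)_e)^2=(UT_n)_e$. The proof of Lemma \ref{invariant} only uses this identity together with the fact that the map in question sends products of $(UT_n)_e$ with itself to products of the images; this is equally true for antiautomorphisms because $\varphi(xy)=\varphi(y)\varphi(x)$ still gives $\varphi((UT_n)_e\cdot (UT_n)_e)=\varphi((UT_n)_e)^2$. Thus the same argument shows $\varphi((UT_n)_e)=(UT_n)_e$, so $\theta(e)=e$.

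The second step extracts homogeneity of $P$. For each $k$, the element $E_{n-k+1,n-k+1}$ has degree $e$, so $\varphi(E_{n-k+1,n-k+1})\in (UT_n)_e$. By the lemma immediately preceding the proposition, the $(k,l)$-entry of $\varphi(E_{n-k+1,n-k+1})$ equals $p_{kl}/p_{kk}$ for $k\leq l$. Since the grading is elementary, every off-neutral homogeneous component of $UT_n$ is spanned by matrix units, so the condition $\varphi(E_{n-k+1,n-k+1})\in (UT_n)_e$ forces $p_{kl}=0$ for every pair $(k,l)$ with $\deg E_{kl}\neq e$. Consequently $P$ is homogeneous of degree $e$.

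For the final step, knowing $P$ is homogeneous of degree $e$, the inner automorphism $c_P(Y):=P Y P^{-1}$ is a graded automorphism of $UT_n$. The identity $X^\circ = P\,\varphi(X)\,P^{-1}=c_P(\varphi(X))$ then exhibits $\circ$ as the composition of a $\theta$-homogeneous map followed by a degree-preserving one; therefore $\circ(A_g)=A_{\theta(g)}$ for every $g\in\mathrm{Supp}\,\Gamma$, so $\circ$ is ($\theta$-)homogeneous. The only conceptually nontrivial step is the first one, where one must verify that the proof of Lemma \ref{invariant} transports to antiautomorphisms; after that, everything reduces to a direct reading of entries of $\varphi(E_{n-k+1,n-k+1})$ and a single algebraic manipulation.
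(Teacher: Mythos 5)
Your proposal is correct and follows essentially the same route as the paper, which proves the proposition by adapting the proof of Theorem \ref{Hom.Aut}: one checks that Lemma \ref{invariant} transfers to antiautomorphisms, applies the entry lemma to $\varphi(E_{n-k+1,n-k+1})$ to force $p_{kl}=0$ whenever $\deg E_{kl}\neq e$, and then recovers the homogeneity of $\circ$ from $X^\circ=P\varphi(X)P^{-1}$. You have merely written out explicitly the steps the paper leaves implicit.
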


\begin{proof}
    The proof is a simple adaptation of the proof of Theorem \ref{Hom.Aut}. The only change needed is to apply $\varphi$ in $E_{n-i+1, n-i+1}$ instead of applying it in $E_{i,i}$. 
\end{proof}

The above proposition has a remarkable consequence:

\begin{corollary}
    Let $UT_n$ be endowed with a $G$-grading. Then there exists a homogeneous antiautomorphism of $UT_n$ if and only if the canonical involution $\circ$ of $UT_n$ is homogeneous. Any other homogeneous involution is a composition map of $\circ$ with a graded automorphism of $UT_n$. 
\end{corollary}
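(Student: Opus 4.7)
The plan is to deduce this corollary immediately from the preceding proposition, which already contains all the real content; the corollary just reorganises that information.

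For the ``if and only if'', one direction is trivial: when $\circ$ is homogeneous, $\circ$ itself (which corresponds to taking $P=I$ in the proposition) is a homogeneous antiautomorphism, so a homogeneous antiautomorphism exists. For the converse, suppose $\varphi$ is any homogeneous antiautomorphism of $UT_n$. By the lemma preceding the proposition, there is an invertible $P\in UT_n$ such that $\varphi(X)=P^{-1}X^\circ P$ for all $X\in UT_n$. The proposition then forces both that $P$ is homogeneous of degree $e$ and that $\circ$ is homogeneous. In particular $\circ$ is homogeneous, as required.

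For the second assertion, let $\varphi$ be a homogeneous involution (the argument works equally well for any homogeneous antiautomorphism). By the proposition, $\varphi(X)=P^{-1}X^\circ P$ with $P\in UT_n$ invertible and homogeneous of degree $e$. Define $\psi\colon UT_n\longrightarrow UT_n$ by $\psi(X)=P^{-1}XP$. As observed immediately before Theorem~\ref{Hom.Aut}, conjugation by a homogeneous invertible matrix of degree $e$ is a graded automorphism, so $\psi$ is a graded automorphism of $UT_n$. Since $\varphi(X)=\psi(X^\circ)$, we have $\varphi=\psi\circ{\circ}$, which exhibits $\varphi$ as the composition of $\circ$ with the graded automorphism $\psi$.

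There is essentially no obstacle to the argument: all the technical work was absorbed into the previous proposition (and, in turn, into the Section~2 description of homogeneous automorphisms), so the only remaining task is to rephrase that proposition as an existence/structure statement.
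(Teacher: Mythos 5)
Your proposal is correct and follows exactly the route the paper intends: the paper presents this corollary as an immediate consequence of the preceding proposition (together with the lemma reducing any antiautomorphism to the form $P^{-1}X^\circ P$), and your argument simply makes that deduction explicit, including the correct factorization $\varphi=\psi\circ{\circ}$ with $\psi$ the conjugation by the degree-$e$ matrix $P$.
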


By the above, in order to determine the homogeneous antiautomorphisms of $UT_n$, it is enough to find conditions on the grading such that the involution $\circ$ is homogeneous. 

As mentioned before, any group grading on $UT_n$ is elementary and any such isomorphism class is uniquely determined by an $(n-1)$-tuple of elements of $G$, $(g_1, \dots, g_{n-1})$ with the condition $\deg E_{i,i+1}=g_i$, for $i\in\{1, \dots, n-1\}$. So from now on we assume $UT_n$ is endowed with an elementary grading $\Gamma$ defined by the $(n-1)$-tuple $(g_1, \dots, g_{n-1})$ of elements of $G$ such that $\deg E_{i,i+1}=g_i$.

\begin{lemma}
    Let $\Gamma: UT_n=\oplus_{g\in G}A_g$ be an elementary $G$-grading on $UT_n$ defined by the $(n-1)$-tuple $(g_1, \dots, g_{n-1})$, and let us assume $\circ$ is a $\theta$-homogeneous involution on $UT_n$. 
    \begin{enumerate}
        \item If $g\in \textrm{Supp}\, \Gamma$, then $\theta^2(g)=g$.
        \item If $A_gA_h\neq \{0\}$ then $\theta(gh)=\theta(h)\theta(g)$.
    \end{enumerate}
\end{lemma}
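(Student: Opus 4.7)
The plan is to prove both parts directly from the definition of a $\theta$-homogeneous antiautomorphism, using in each case the fact that a nonzero element of $UT_n$ lies in a unique homogeneous component, so that whenever we exhibit the same nonzero element as belonging to two components $A_{g_1}$ and $A_{g_2}$ we may conclude $g_1=g_2$.

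For part (1), I would observe that since $\circ$ is an involution, $\circ^2$ is the identity of $UT_n$. For $g\in \textrm{Supp}\,\Gamma$, applying the defining relation $(A_g)^\circ=A_{\theta(g)}$ twice gives
\[
A_g = (A_g)^{\circ\circ} = \bigl(A_{\theta(g)}\bigr)^\circ = A_{\theta^2(g)}.
\]
Since $A_g\neq\{0\}$ and the grading is a direct sum, the two subspaces $A_g$ and $A_{\theta^2(g)}$ coincide only if $\theta^2(g)=g$.

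For part (2), if $A_gA_h\neq\{0\}$, I would pick homogeneous elements $x\in A_g$ and $y\in A_h$ with $xy\neq 0$. Note that $A_gA_h\subseteq A_{gh}$, so $gh\in \textrm{Supp}\,\Gamma$ and $\theta(gh)$ is defined; moreover $g,h\in\textrm{Supp}\,\Gamma$, so $\theta(g),\theta(h)$ are defined. Since $\circ$ is an antiautomorphism,
\[
(xy)^\circ = y^\circ x^\circ.
\]
The left hand side lies in $\bigl(A_{gh}\bigr)^\circ = A_{\theta(gh)}$, while $y^\circ\in A_{\theta(h)}$ and $x^\circ\in A_{\theta(g)}$ give $y^\circ x^\circ\in A_{\theta(h)\theta(g)}$. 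Because $\circ$ is a bijection and $xy\neq 0$, the common value $(xy)^\circ=y^\circ x^\circ$ is nonzero. A nonzero element of the graded algebra $UT_n$ cannot lie in two distinct homogeneous components, hence $\theta(gh)=\theta(h)\theta(g)$.

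There is no real obstacle here; both parts are essentially bookkeeping with the direct sum decomposition. The only point requiring a little care is verifying that the group elements appearing in each identity are actually in the support (so that $\theta$ is defined on them), which in both cases is guaranteed by the hypothesis that the relevant component is nonzero.
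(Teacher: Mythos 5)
Your proof is correct and follows essentially the same route as the paper: part (1) by applying $(A_g)^\circ=A_{\theta(g)}$ twice, and part (2) by observing that a nonzero element of $(A_gA_h)^\circ$ lies in both $A_{\theta(gh)}$ and $A_{\theta(h)\theta(g)}$, forcing these to coincide. The paper phrases part (2) at the level of subspaces rather than individual elements, but the argument is the same.
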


\begin{proof}
    Item $(1)$ follows directly from the fact that $\circ$ is an involution.
    
    To prove $(2)$ assume $A_gA_h\neq\{0\}$. Since $\circ$ is bijective, $(A_gA_h)^\circ\neq\{0\}$. 
    On one hand, $A_gA_h\subseteq A_{gh}$ and this implies that \[(A_gA_h)^\circ\subseteq (A_{gh})^\circ=A_{\theta(gh)}.\]
    On the other hand, \[(A_gA_h)^{\circ}=A_h^\circ A_g^\circ=A_{\theta(h)}A_{\theta(g)}\subseteq A_{\theta(h)\theta(g)}.\]
    By the above, \[\{0\}\neq (A_g A_h)^\circ\subseteq A_{\theta(gh)}\cap A_{\theta(h)\theta(g)}, \] 
    and we must have $\theta(gh)=\theta(h)\theta(g)$.
\end{proof}

Let us now analyze the action of $\circ$ on matrix units. 

Before that, we observe that we have a closed expression for $\textrm{Supp}\, \Gamma$.\linebreak Indeed, since $\deg E_{i,i}=e$ and for any $i\in\{1, \dots, n-1\}$ and $k\in \{1, \dots, n-i\}$, $\deg E_{i,i+k}=g_i\cdots g_{i+k-1}$, we have
\[\textrm{Supp}\, \Gamma= \{g_i \cdots g_{i+k-1}\,|\, i\in\{1, \dots, n-1\}, \, k\in \{1, \dots, n-i \}\}\cup\{e\}.\]

We have $E_{i,j}^\circ = E_{n-j+1,n-i+1}$.  In particular, $E_{i,i+1}^\circ = E_{n-i,n-i+1}$ and assuming the map $\circ$ $\theta$-homogeneous, we must have for any $i\leq j$, $\deg(E_{i,j})^\circ = \theta (\deg E_{i,j})$. In particular, for $j=i+1$, we obtain 
\[\theta(g_i)=\theta(\deg E_{i,i+1})=\deg (E_{i.i+1})^\circ= \deg (E_{n-i,n-i+1}) = g_{n-i}.\]
By the homogeneity of $\circ$, if $g_i=g_j$, we obtain $g_{n-i}=g_{n-j}$.
More generally, if $g_i\cdots g_{i+k-1}=g_j \cdots g_{j+l-1}\in \textrm{Supp}\, \Gamma$, the homogeneity of $\circ$ implies $\theta (g_i\cdots g_{i+k-1}) = \theta(g_j \cdots g_{j+l-1})$. By applying the properties of $\theta$, and the expressions of $\theta(g_i)$ above, we obtain that if $g_i\cdots g_{i+k-1}=g_j \cdots g_{j+l-1}\in \textrm{Supp}\, \Gamma$ then $g_{n-i-k+1}\cdots g_{n-i}=g_{n-j-l+1}\cdots g_{n-j}$.

Next we present the main result of the paper, which asserts that the above conditions are also necessary.

\begin{theorem}
    Let $G$ be a group and let $\Gamma: UT_n=\oplus_{g\in G} A_g$ be a $G$-grading on $UT_n$. Then the following assertions are equivalent.
    \begin{enumerate}
        \item The canonical involution $\circ$ on $UT_n$ is homogeneous.
        \item If $g_i\cdots g_{i+k-1}=g_j \cdots g_{j+l-1}\in \textrm{Supp}\, \Gamma$ then we have $g_{n-i-k+1}\cdots g_{n-i}=g_{n-j-l+1}\cdots g_{n-j}$.
    \end{enumerate}
\end{theorem}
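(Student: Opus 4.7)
The direction (1) $\Rightarrow$ (2) is essentially already established in the paragraph preceding the theorem: assuming $\circ$ is $\theta$-homogeneous, one uses $\theta(g_i)=g_{n-i}$ together with the multiplicativity property of $\theta$ from the preceding lemma (item (2)), on products $g_i\cdots g_{i+k-1}$, which is valid because $A_{g_i}\cdots A_{g_{i+k-1}}\neq\{0\}$ (it contains $E_{i,i+k}$). So my plan would be to reference that calculation and focus on the converse.

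For (2) $\Rightarrow$ (1), the strategy is to construct $\theta$ by hand. I would define a map $\theta: \textrm{Supp}\,\Gamma\longrightarrow \textrm{Supp}\,\Gamma$ by $\theta(e)=e$ and
\[
\theta(g_i\cdots g_{i+k-1}) = g_{n-i-k+1}\cdots g_{n-i},
\]
using the closed-form description of $\textrm{Supp}\,\Gamma$ exhibited just before the theorem. Hypothesis (2) is exactly the statement that this prescription is well-defined (different representations of the same element in $\textrm{Supp}\,\Gamma$ are sent to the same element). The image lies in $\textrm{Supp}\,\Gamma$ because $g_{n-i-k+1}\cdots g_{n-i}=\deg E_{n-i-k+1,n-i+1}$.

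Next I would verify that $\theta$ is a bijection. The key observation is that $\theta\circ\theta = \mathrm{id}$ on $\textrm{Supp}\,\Gamma$, since substituting $i'=n-i-k+1$ gives $n-i'-k+1=i$, so $\theta$ sends $g_{n-i-k+1}\cdots g_{n-i}$ back to $g_i\cdots g_{i+k-1}$; to know that this formula on the image is also well-defined one again invokes (2), applied now to equalities among products $g_{n-i-k+1}\cdots g_{n-i}$, which is the same hypothesis read in reverse. Hence $\theta$ is its own inverse.

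Finally, I would check that $\circ$ is $\theta$-homogeneous. Since the grading is elementary, each $A_g$ is spanned by those matrix units $E_{i,j}$ with $g_i\cdots g_{j-1}=g$. Setting $k=j-i$,
\[
E_{i,j}^\circ = E_{n-j+1,n-i+1}, \qquad \deg E_{n-j+1,n-i+1}=g_{n-j+1}\cdots g_{n-i}=g_{n-i-k+1}\cdots g_{n-i}=\theta(g),
\]
so $E_{i,j}^\circ\in A_{\theta(g)}$. Therefore $(A_g)^\circ\subseteq A_{\theta(g)}$ for every $g\in\textrm{Supp}\,\Gamma$, and since $\circ$ is a bijective involution and $\theta$ is a bijection on the support, the inclusions are equalities. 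The only subtle step is making sure that the well-definedness and the bijectivity of $\theta$ both follow from the single symmetric hypothesis (2); beyond that, the argument is a direct bookkeeping of degrees of matrix units under $\circ$.
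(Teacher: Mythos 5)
Your proposal is correct and follows essentially the same route as the paper: define $\theta(g_i\cdots g_{i+k-1})=g_{n-i-k+1}\cdots g_{n-i}$, use hypothesis (2) for well-definedness, and verify $(A_g)^\circ=A_{\theta(g)}$ on matrix units. The only cosmetic difference is that you establish bijectivity by showing $\theta$ is its own inverse, whereas the paper shows injectivity and invokes finiteness of the support; both arguments rest on the same symmetric reading of condition (2).
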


\begin{proof}
    The implication (1) $\Longrightarrow$ (2) was discussed above.
    
    So we just need to prove that if (2) is true, then $\circ$ is homogeneous.
    To prove $\circ$ is homogeneous, we must define a bijective map $\theta: \textrm{Supp}\, \Gamma \longrightarrow \textrm{Supp}\, \Gamma$ such that $(A_g)^\circ = A_{\theta(g)}$, for any $g\in \textrm{Supp}\, \Gamma$.
    
    We define $\theta$ on $\{g_1, \dots, g_n\}$ by $\theta(g_i)=g_{n-i}$ and extend it to any element of $\textrm{Supp}\, \Gamma$ by defining $\theta(g_i\cdots g_{i+k-1})=g_{n-i-k+1}\cdots g_{n-i}$.
    
    First we observe that $\theta$ is well defined. Indeed, any element of $\textrm{Supp}\, \Gamma$ is of the form above. Moreover, if one such element is represented in two different ways as $g_i\cdots g_{i+k-1}=g_j \cdots g_{j+l-1}$, condition (2) asserts that their value under $\theta$ is the same.
    
    Now we prove that $\theta$ is bijective. Since $\textrm{Supp}\, \Gamma$ is a finite set, it is enough to show that $\theta$ is injective. Assume that $\theta(g_i\cdots g_{i+k-1})=\theta(g_j\cdots g_{j+l-1})$. This means $g_{n-i-k+1}\cdots g_{n-i} = g_{n-j-l+1}\cdots g_{n-j}$. Again, condition (2) shows that the last equation implies that  $g_i\cdots g_{i+k-1}=g_j\cdots g_{n-l-j}$ proving that $\theta$ is injective.
    
    Finally, we just need to prove that $(A_g)^\circ=A_{\theta(g)}$, for each $g\in \textrm{Supp}\, \Gamma$. This follows directly by applying the map $\circ$ on matrix units. Indeed, since $g=\deg(E_{i,i+k})=g_i\cdots g_{i+k-1}$ and $(E_{i,i+k})^\circ=E_{n-i-k+1,n-i+1}$, we see that $\deg(E_{n-i-k+1,n-i+1})=g_{n-i+k-1}\cdots g_{n-i}=\theta(g)$, and the proof is complete.
\end{proof}

When we restrict to the case of involutions, we have

\begin{corollary}
    Let $UT_n$ be endowed with an elementary $G$-grading defined by the $(n-1)$-tuple $(g_1, \dots, g_{n-1})$ of elements of $G$. Then an involution $\varphi:UT_n \longrightarrow UT_n$ is a $\theta$-homogeneous involution if and only if the following conditions hold.
    \begin{enumerate}
        \item If $g_i\cdots g_{i+k-1}=g_j \cdots g_{j+l-1}\in \textrm{Supp}\, \Gamma$ then we have $g_{n-i-k+1}\cdots g_{n-i}=g_{n-j-l+1}\cdots g_{n-j}$.
        \item There exists an invertible matrix $P\in UT_n$ of degree 0 such that $\varphi(X)=P^{-1}X^\circ P$, for any $X\in UT_n$. The matrix $P$ satisfy $P^\circ=P$ or $P^{\circ}=-P$ if $n$ is even and $P^\circ = P$ if $n$ is odd.
    \end{enumerate} 
\end{corollary}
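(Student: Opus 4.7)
The plan is to combine the preceding proposition, which describes homogeneous antiautomorphisms of $UT_n$ as maps of the form $\varphi(X)=P^{-1}X^\circ P$ with $P$ homogeneous of degree $e$ and $\circ$ homogeneous, with the previous theorem, which says that $\circ$ is homogeneous precisely when condition (1) holds. With these two results at hand, the only remaining task is to extract the additional constraint on $P$ imposed by the involution condition $\varphi^2=\mathrm{id}$.

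For the forward direction, I would first note that a $\theta$-homogeneous involution is in particular a homogeneous antiautomorphism, so the proposition yields the factorization $\varphi(X)=P^{-1}X^\circ P$ with $\deg P=e$, while the theorem supplies condition (1). Next, I would unwind $\varphi^2(X)$, using that $\circ$ is an antihomomorphism and an involution, to arrive at the expression $P^{-1}P^\circ X(P^\circ)^{-1}P$. The requirement $\varphi^2(X)=X$ for all $X\in UT_n$ then forces $P^{-1}P^\circ$ to commute with every element of $UT_n$, hence to equal $\lambda I$ for some scalar $\lambda\in K^\times$. Applying $\circ$ once more to $P^\circ=\lambda P$ produces $\lambda^2=1$, so $\lambda=\pm 1$. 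To finish (2), I would rule out $\lambda=-1$ when $n$ is odd by inspecting the middle diagonal entry: $P^\circ=-P$ would force $p_{(n+1)/2,(n+1)/2}=0$, contradicting the invertibility of $P$ in $UT_n$.

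For the converse, assuming both conditions hold, the earlier proposition and theorem immediately give that $\varphi$ is a homogeneous antiautomorphism, and the same computation of $\varphi^2(X)$ using $P^\circ=\pm P$ collapses directly to the identity, so $\varphi$ is indeed an involution. I do not foresee a real obstacle here; the only subtle point is the parity check ruling out $P^\circ=-P$ for odd $n$, which amounts to reading a single diagonal entry. Everything else is a routine application of the results proved earlier in this section.
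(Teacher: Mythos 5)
Your proposal is correct and follows essentially the same route as the paper: both reduce the statement to the preceding proposition and theorem and then extract the constraint $P^\circ=\pm P$ by computing $\varphi^2(X)=P^{-1}P^\circ X(P^\circ)^{-1}P$, concluding that $P^{-1}P^\circ$ is central and hence a scalar $\lambda$ with $\lambda^2=1$, and ruling out $\lambda=-1$ for odd $n$ via the middle diagonal entry. The only difference is that you spell out the converse and the invocation of the earlier results explicitly, which the paper leaves implicit.
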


\begin{proof}
    The only thing we need to prove is that $P$ must satisfy $P^\circ=\pm P$. Actually this is a classical result. We include its proof here for the sake of completeness. Since $\varphi(X)=P^{-1}X^\circ P$ and $\varphi^2$ is the identity map, we obtain $X=P^{-1}P^\circ X (P^{-1})^\circ P$. Using that $(P^{-1})^\circ=(P^{\circ})^{-1}$, we obtain that for any $X\in UT_n$, $P^{-1}P^{\circ}X=XP^{-1}P^{\circ}$, i.e., $P^{-1}P^{\circ}$ is a central element of $UT_n$. Since the central elements of $UT_n$ consist of scalar matrices, it follows that $P^{\circ}=\lambda P$, for some $\lambda \in K$. Since $P^{\circ\circ}=P$, we obtain $P=\lambda^2 P$ which implies that $\lambda=\pm 1$. Finally we observe that the case $\lambda=-1$ cannot occur if $n$ is odd. Indeed, if $n$ is odd, the condition $P^\circ=-P$ would imply that the entry $((n+1)/2, (n+1)/2)$ of $P$ would be zero, which cannot occur once $P$ is invertible.
\end{proof}

Now let us assume that the involution $\circ$ of $UT_n$ is $\theta$-homogeneous. Since an arbitrary homogeneous antiautomorphism (involution) $\varphi$ is a composition map of $\circ$ with a graded automorphism, we have the following corollary.

\begin{corollary}
    Let $UT_n$ be graded by a group $G$. Then an arbitrary homogeneous antiautomorphism (involution) $\varphi$ of $UT_n$ is $\theta$-homogeneous if and only if $\circ$ is $\theta$-homogeneous.
\end{corollary}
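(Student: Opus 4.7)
The plan is to combine the proposition characterizing homogeneous antiautomorphisms as $\varphi(X)=P^{-1}X^{\circ}P$ with $P$ homogeneous of degree $e$, together with the key earlier fact (Theorem \ref{Hom.Aut} and its corollary) that any homogeneous automorphism of $UT_n$ is in fact \emph{graded}, i.e.\ preserves each homogeneous component. Once these two ingredients are in place, the argument reduces to chasing the definition of $\theta$-homogeneity through the formula for $\varphi$.

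First I would assume $\circ$ is $\theta$-homogeneous and let $\varphi$ be any homogeneous antiautomorphism. By the proposition, $\varphi$ has the form $\varphi(X)=P^{-1}X^{\circ}P$ for some invertible $P\in UT_n$ with $\deg P=e$. For any $g\in\textrm{Supp}\,\Gamma$, I compute
\[
\varphi(A_g)=P^{-1}(A_g)^{\circ}P=P^{-1}A_{\theta(g)}P.
\]
Since $P$ is homogeneous of degree $e$, the inner automorphism $X\mapsto P^{-1}XP$ is a graded automorphism of $UT_n$, hence $P^{-1}A_{\theta(g)}P=A_{\theta(g)}$. Thus $\varphi(A_g)=A_{\theta(g)}$, so $\varphi$ is $\theta$-homogeneous.

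For the converse, suppose $\varphi$ is $\theta$-homogeneous, again written as $\varphi(X)=P^{-1}X^{\circ}P$ with $\deg P=e$. Reading the same chain of equalities in reverse,
\[
A_{\theta(g)}=\varphi(A_g)=P^{-1}(A_g)^{\circ}P=(A_g)^{\circ},
\]
where the last equality uses once more that conjugation by the degree-$e$ matrix $P$ fixes each component. Hence $(A_g)^{\circ}=A_{\theta(g)}$ for every $g\in\textrm{Supp}\,\Gamma$, i.e.\ $\circ$ is $\theta$-homogeneous.

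The only subtle point — and really the only place where genuine content is used — is the appeal to the fact that inner conjugation by a homogeneous element of degree $e$ is a graded automorphism (not merely a homogeneous one permuting components), which is precisely what Theorem \ref{Hom.Aut} and its corollary provide. Everything else is formal manipulation with the identity $\varphi(X)=P^{-1}X^{\circ}P$.
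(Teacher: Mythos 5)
Your argument is correct and is essentially the paper's own: the paper justifies this corollary in one line by noting that any homogeneous antiautomorphism is $\circ$ composed with a graded automorphism, which is exactly the content of your computation $\varphi(A_g)=P^{-1}(A_g)^{\circ}P$ with $\deg P=e$. The only quibble is attribution: the fact that conjugation by an invertible homogeneous matrix of degree $e$ is a \emph{graded} automorphism is the easy observation stated just before Theorem \ref{Hom.Aut}, not the theorem itself (which is the converse); this does not affect the validity of your proof.
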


The above corollary means that once  a $G$-grading structure on $UT_n$ is given, the type $\theta$ of homogeneous antiautomorphisms (involutions) of $UT_n$ is completely determined. 
When restricted to the more studied cases of homogeneous antiautomorphisms (involutions), we have the following corollary.

\begin{corollary}
    Let $UT_n$ be graded by a group $G$. Then an arbitrary homogeneous antiautomorphism (involution) $\varphi$ of $UT_n$ is 
    \begin{enumerate}
        \item a graded antiautomorphism (involution) if and only if $\circ$ is a graded involution.
        \item a degree-inverting antiautomorphism (involution) if and only if $\circ$ is a degree-inverting involution.
    \end{enumerate}   
\end{corollary}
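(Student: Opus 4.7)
The plan is to derive both equivalences as direct specializations of the preceding corollary, which asserts that an arbitrary homogeneous antiautomorphism $\varphi$ of $UT_n$ is $\theta$-homogeneous if and only if the canonical involution $\circ$ is $\theta$-homogeneous. Once this is in hand, I only need to recognize that the two notions of interest correspond to two particular choices of $\theta$, namely the identity on $\textrm{Supp}\,\Gamma$ and the inversion map $g\mapsto g^{-1}$ (restricted to $\textrm{Supp}\,\Gamma$).

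For part (1), I would recall from the preliminaries that a graded antiautomorphism is exactly a $\theta$-homogeneous antiautomorphism with $\theta=\mathrm{id}_{\textrm{Supp}\,\Gamma}$. Applying the preceding corollary with this $\theta$, $\varphi$ is graded if and only if $\circ$ is $\mathrm{id}$-homogeneous, i.e., $\circ$ is itself a graded involution. For part (2), similarly, a degree-inverting antiautomorphism is a $\theta$-homogeneous antiautomorphism with $\theta(g)=g^{-1}$; one must first observe that this map does send $\textrm{Supp}\,\Gamma$ to itself whenever there exists any homogeneous antiautomorphism of this type (a trivial consequence of $\theta$ being a bijection of the support). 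Then the preceding corollary with $\theta(g)=g^{-1}$ immediately gives that $\varphi$ is degree-inverting if and only if $\circ$ is degree-inverting.

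Since both implications in each item are just instantiations of an already-proved result, there is essentially no obstacle: no new computation on matrix units, no new analysis of the support, and no use of the explicit description of homogeneous automorphisms from Theorem \ref{Hom.Aut} is needed beyond what was used to prove the previous corollary. The only minor point worth stating explicitly in the write-up is that, by the earlier result characterizing an arbitrary homogeneous antiautomorphism as a composition $\varphi=\alpha\circ\,\circ$ (equivalently $\varphi(X)=P^{-1}X^{\circ}P$ with $P$ homogeneous of degree $e$), the type $\theta$ of $\varphi$ coincides with the type of $\circ$, so no shift in $\theta$ occurs under composition with a graded automorphism. After this short remark, the proof reduces to two one-line specializations.
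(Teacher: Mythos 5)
Your proposal is correct and matches the paper's intent exactly: the paper gives no separate proof for this corollary, treating it as an immediate specialization of the preceding corollary (that $\varphi$ is $\theta$-homogeneous iff $\circ$ is $\theta$-homogeneous) to the two cases $\theta=\mathrm{id}$ and $\theta(g)=g^{-1}$, which is precisely what you do.
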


In particular, we obtain the interesting fact that if a homogeneous antiautomorphism on $UT_n$ is graded, then any homogeneous antiautomorphism of $UT_n$ is graded.

\section*{Funding}

The author was supported by grants \#2018/15627-2 and \#2018/23690-6 S\~ao Paulo Research Foundation (FAPESP).

\end{document}